\theoremstyle{plain}
\newtheorem{thm}{Theorem}
\newtheorem{con}[thm]{Conjecture}
\newtheorem{cor}[thm]{Corollary}
\newtheorem{lmm}[thm]{Lemma}
\newtheorem{prp}[thm]{Proposition}
\newtheorem*{theorm}{Theorem}
\theoremstyle{definition}
\newtheorem*{rem}{Remark}
\theoremstyle{definition}
\numberwithin{equation}{section}
\newcommand{\C}{\mathbb{C}}
\newcommand{\cO}{\mathcal{O}}
\newcommand{\cH}{\mathcal{H}}
\newenvironment{myproof}[1] {\textit{Proof of {#1} :}}{\hfill$\square$}
\def \hf{\hspace*{0.5cm}}
\begin{document}
\title{On endomorphism of algebraic varieties}
\author[N. Das]{Nilkantha Das}
\address{School of Mathematical Sciences, National Institute of Science 
         Education and Research, Bhubaneswar, HBNI, Odisha 752050, India}
\email{nilkantha.das@niser.ac.in }
\subjclass[2020]{14E25, 14R10, 32C20}

\begin{abstract}
We prove that a quasi-finite endomorphism of an algebraic variety over an algebraically closed field of characteristic zero, that is injective on the complement of a closed subvariety, is an automorphism. We also prove that an endomorphism of complex algebraic variety that is injective on the complement of a closed subvariety of codimension at least $2$, is an automorphism. 
\end{abstract}

\maketitle
\tableofcontents

\section{Introduction}
\hf Let $k$ be an algebraically closed field of characteristic zero. Under what conditions an endomorphism of an algebraic variety over $k$ will be an automorphism is an active area of research. For example, an \'{e}tale endomorphism of an affine space is an automorphism is an open question which is known as so-called the \textit{Jacobian conjecture}. The following remarkable result along this line is due to Ax \cite{Ax}:
\begin{theorm}[Ax]
Let $X$ be an algebraic variety over an algebraically closed field of characteristic zero and $\phi:X \rightarrow X$ be an endomorphism of $X$. The endomorphism $\phi$ is injective if and only if it is an automorphism.
\end{theorm}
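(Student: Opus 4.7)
The direction \emph{automorphism $\Rightarrow$ injective} is immediate, so my plan focuses on the converse. The overall strategy is to combine a Lefschetz-principle argument (for surjectivity) with a characteristic-zero argument (for birationality), and then invoke Zariski's Main Theorem to conclude that $\phi$ is an isomorphism. The first reduction is to the case that $X$ is irreducible: an injective endomorphism permutes the finitely many irreducible components (each component maps into some component of equal dimension, distinct components land in distinct components by injectivity, and a dimension count forces a permutation), so passing to an iterate of $\phi$ makes every component invariant and reduces to the irreducible case.

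Next I will establish surjectivity by the Lefschetz principle. Spreading $(X, \phi)$ out over a finitely generated $\mathbb{Z}$-subalgebra $A \subseteq k$ and specializing at a maximal ideal of residue characteristic $p$ produces $(X_p, \phi_p)$ over $\overline{\mathbb{F}_p}$; the variety $X_p$ is a directed union of the finite sets $X_p(\mathbb{F}_{p^n})$, each preserved by $\phi_p$, and an injective self-map of a finite set is automatically a bijection. This gives surjectivity of $\phi_p$, and Lefschetz transfer (surjectivity being a first-order property in a suitable scheme-theoretic language) moves the conclusion back to characteristic zero. Now $\phi$ is bijective, and to obtain birationality I use that in characteristic zero the finite field extension $k(X)/\phi^{*}k(X)$ is separable, so its degree equals the size of a generic geometric fiber; this is $1$ by injectivity, hence $\phi$ is birational.

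At this point $\phi$ is bijective and birational. When $X$ is normal, Zariski's Main Theorem applied to $\phi$ (quasi-finite with normal target) factors it as an open immersion followed by a finite birational morphism; the latter is an isomorphism on a normal base, so $\phi$ itself is an open immersion, and combined with surjectivity $\phi$ is an automorphism. For non-normal $X$, I would pass to the normalization $\pi : \tilde{X} \to X$, lift $\phi$ to $\tilde{\phi} : \tilde{X} \to \tilde{X}$ via the universal property, verify that $\tilde{\phi}$ is still injective and birational (hence an automorphism of $\tilde{X}$ by the normal case), and descend using that the conductor ideal $\mathfrak{c} \subseteq \mathcal{O}_X$ is preserved by any $\phi$ that lifts to the normalization.

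The step I expect to be the main obstacle is this final descent: verifying that $\tilde{\phi}$ is genuinely injective over the non-normal locus (where $\pi$ identifies several points) and that the resulting automorphism of $\tilde{X}$ actually descends to an automorphism of $X$, rather than merely to a bijective morphism. The other ingredients---permutation of components, Lefschetz transfer, and the separable-degree calculation---are fairly standard; the real subtlety lies in how the non-normal structure of $X$ interacts with the lift to the normalization.
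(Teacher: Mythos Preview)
The paper does not contain a proof of this statement. Ax's theorem is quoted in the introduction as a known result with a citation to \cite{Ax}, and the paper remarks that it ``has been proved later in several different approaches, for example \cite{Boel}, \cite{litaka}.'' It is then used as a black box throughout (in \Cref{normal_case_reducing}, in the proof of \Cref{main_thm_quasi_finite_case}, and at the end of the proof of \Cref{main_thm_codim_case}). So there is no ``paper's own proof'' to compare your proposal against.

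That said, a few remarks on your sketch in isolation. First, note the paper's conventions: a \emph{variety} here is integral, so $X$ is already irreducible and your component-permutation reduction is unnecessary in this context. Second, your outline---spread out over a finitely generated $\mathbb{Z}$-algebra, specialize to positive characteristic, use finiteness of $\mathbb{F}_{p^n}$-points, and transfer back---is indeed the core of Ax's original model-theoretic argument; the point you gloss over (``surjectivity being a first-order property'') is exactly where the real content lies, and in a written proof you would need either the model-theoretic machinery or a careful constructibility/Chevalley argument to make the transfer rigorous. Third, your worry about the descent from the normalization is well placed: the cleanest route is not via the conductor but simply to observe that once $\phi$ is known to be bijective and birational on $X$, Zariski's Main Theorem already gives that $\phi$ is an open immersion into the normalization of its image---but since $\phi$ is surjective and birational onto $X$ itself, one can bypass the normalization entirely by arguing directly that a bijective birational morphism of varieties in characteristic zero is an isomorphism (e.g.\ via generic \'etaleness plus the fact that a bijective finite birational morphism between varieties is an isomorphism when the target is seminormal, or more simply by using that the inverse is a morphism by Zariski's Main Theorem applied to the graph). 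In any case, these are refinements of a standard argument, not a comparison with the present paper.
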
 
This theorem has been proved later in several different approaches, for example \cite{Boel}, \cite{litaka}. In $2005$, M. Miyanishi \cite{Miyanishi} proposed the following conjecture:
\begin{con}\label{main_conj}
Let $\phi:X \longrightarrow X$ be an endomorphism of an algebraic variety $X$ over an algebraically closed field of characteristic zero and, let $Y$ be a proper closed subvariety of $X$ such that the restriction of $\phi$ to $X \setminus Y$ is injective. Suppose furthermore that either $\phi$ is quasi-finite or $Y$ has codimension at least $2$ in $X$. Then $\phi$ is an automorphism.
\end{con}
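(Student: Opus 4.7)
The plan is to reduce both cases to Ax's theorem above by showing $\phi$ is injective. An observation used throughout is that $\phi$ is automatically birational: since $\phi|_{X\setminus Y}$ is injective and $X\setminus Y$ is dense, $\phi$ is dominant of generic degree one. The key technical move is to pass to the normalization $\pi:\tilde{X}\to X$; by the universal property of normalization applied to the dominant morphism $\phi\circ\pi$, there is a canonical lift $\tilde{\phi}:\tilde{X}\to\tilde{X}$ with $\pi\circ\tilde{\phi}=\phi\circ\pi$. Because $\pi$ is an isomorphism over the normal locus of $X$, the lift $\tilde{\phi}$ remains injective on the complement of the proper closed subvariety $\pi^{-1}(Y\cup\mathrm{Sing}(X))$, and $\tilde{\phi}$ inherits the relevant hypothesis (quasi-finiteness, or injectivity complement of codimension $\ge 2$) from $\phi$ because $\pi$ is finite.

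For the quasi-finite case, $\tilde{\phi}$ is now a quasi-finite birational endomorphism of the normal variety $\tilde{X}$. Grothendieck's form of Zariski's Main Theorem factors $\tilde{\phi}$ as an open immersion followed by a finite morphism; normality of the target combined with birationality forces the finite part to be an isomorphism, so $\tilde{\phi}$ itself is an open immersion. Being injective, $\tilde{\phi}$ is then an automorphism of $\tilde{X}$ by Ax's theorem. To descend: given $\phi(x_1)=\phi(x_2)$, lift to preimages $z_i\in\pi^{-1}(x_i)$, use the bijectivity of $\tilde{\phi}$ together with the original injectivity of $\phi$ on the dense open $X\setminus Y$ to force $x_1=x_2$, and then apply Ax's theorem directly to the now-injective endomorphism $\phi$.

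For the codimension-$2$ case, which is restricted to complex varieties, I expect the proof to exploit analytic methods. One possibility is to construct the set-theoretic inverse $\phi^{-1}$ as a rational map on $\phi(X\setminus Y)$, whose indeterminacy locus is contained in $\phi(Y)$ (of dimension at most $\dim X-2$) together with the complement of the image; if the latter is also of codimension $\ge 2$, a Hartogs-type extension theorem on the normal variety $\tilde{X}$ extends $\phi^{-1}$ to a morphism and hence provides a two-sided inverse. An alternative route is to note that any positive-dimensional irreducible component of a fiber of $\phi$ must lie in $Y$, so the exceptional locus has codimension $\ge 2$; over $\mathbb{C}$ one can then invoke a proper-mapping or ramification-theoretic argument (e.g., a suitable form of Remmert's theorem) to rule out such fibers and conclude $\phi$ is quasi-finite, reducing to the case already handled.

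The main obstacle I anticipate is the descent step from $\tilde{\phi}$ to $\phi$: an automorphism of a normalization need not descend to an automorphism of the original variety without further input, and the argument genuinely uses both the canonical nature of the lift (so that $\tilde{\phi}$ respects the fibers of $\pi$) and the injectivity hypothesis on $X\setminus Y$ (to rule out spurious coincidences between points over the normal and non-normal locus). A secondary obstacle, specific to the codimension-$2$ case, is actually establishing the Hartogs-type extension, or alternatively ruling out the positive-dimensional fibers, from the algebraic hypotheses alone; the paper's restriction of the second statement to $\mathbb{C}$ strongly suggests that methods proper to the analytic category play an essential role here.
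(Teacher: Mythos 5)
Your overall strategy --- reduce to Ax's theorem by proving injectivity, pass to the normalization, and use Zariski's Main Theorem for the quasi-finite case --- matches the paper, and your treatment of the quasi-finite case \emph{on the normalization} is essentially the paper's argument (birational $+$ quasi-finite $+$ normal $+$ ZMT $\Rightarrow$ injective $\Rightarrow$ automorphism by Ax). However, the two points you yourself flag as obstacles are genuine gaps, and neither is closed by what you write. First, the descent from $\widetilde{X}$ to $X$: your sketch ``lift $\phi(x_1)=\phi(x_2)$ to preimages $z_i\in\nu^{-1}(x_i)$ and use bijectivity of $\widetilde{\phi}$ plus injectivity of $\phi$ off $Y$'' does not close. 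If $x_1\neq x_2$ both lie in $Y$, the lifts only satisfy $\widetilde{\phi}(z_1),\widetilde{\phi}(z_2)\in\nu^{-1}(\phi(x_1))$, which is no contradiction when that fiber has more than one point; injectivity of $\phi$ on the dense open $X\setminus Y$ says nothing about these points. The paper's actual mechanism (following Kaliman) is a stratification argument: set $n(x)=\#\nu^{-1}(x)$ and let $X_k$ be the locus where $n$ attains its maximum $k$. Since $\widetilde{\phi}(\nu^{-1}(x))\subseteq\nu^{-1}(\phi(x))$ and $\widetilde{\phi}$ is bijective, $n(\phi(x))\geq n(x)$, so $\phi$ preserves $X_k$, is injective there, hence is a bijection of $X_k$ by Ax; one then deletes $X_k$, repeats on the next stratum, and concludes $\phi$ is injective on all of $X$. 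This inductive stratification is the missing idea in your descent.

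Second, the codimension-$2$ case is left as a pair of unexecuted alternatives, and the first of them does not work in the stated generality. Extending $\phi^{-1}$ by a Hartogs-type theorem is essentially Kaliman's original argument, which is why his result is restricted to $X$ affine or complete: for a general variety there is no ambient affine or projective space into which to extend the rational inverse across a codimension-$2$ set. Your second alternative (rule out positive-dimensional fibers) points in the right direction but omits all of the actual content. The paper proceeds by showing $\cH_X\longrightarrow\phi_*\cH_X$ is an isomorphism of analytic structure sheaves (Riemann's second removable singularity theorem on the normal space $X^{\text{an}}$, using that both $Y^{\text{an}}$ and $Z^{\text{an}}=X^{\text{an}}\setminus\phi(X\setminus Y)^{\text{an}}$ have codimension at least $2$), deduces that each analytic stalk map $\cH_{X,\phi(p)}\rightarrow\cH_{X,p}$ is an integral extension via weakly holomorphic functions on $\overline{\phi(C)}$, upgrades this to an isomorphism of algebraic stalks $\cO_{X,\phi(p)}\rightarrow\cO_{X,p}$ using normality and comparison of completions, and only then concludes that $\phi$ is flat with zero-dimensional fibers, hence quasi-finite, reducing to the first case. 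None of this analytic machinery is supplied in your proposal, so the codimension-$2$ case remains unproved as written.
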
    
\hf This conjecture can be thought as a generalization of the theorem of Ax. We refer to this conjecture as Miyanishi conjecture. In the case of  $\text{codim}_XY \geq 2$, if $X$ is either affine or complete, an affirmative answer to this conjecture is known by Kaliman \cite{Kaliman} (`$\text{codim}_XY$' means the codimension of $Y$ in $X$). Also, an example is given there to assert the fact that the conjecture in the case of $\text{codim}_XY \geq 2$, can't be extended to schemes unlike the theorem of Ax. In this article, we will show that the conjecture is true in general. More precisely, we prove the following theorems:
\begin{thm}\label{main_thm_quasi_finite_case}
Let $\phi:X \longrightarrow X$ be an endomorphism of an algebraic variety $X$ over an algebraically closed field $k$ of characteristic zero and, let $Y$ be a proper closed subvariety of $X$ such that the restriction of $\phi$ to $X \setminus Y$ is injective. If $\phi$ is quasi-finite, then it is an automorphism.
\end{thm}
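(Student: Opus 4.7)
By Ax's theorem it suffices to show that $\phi$ is injective, and the plan is to prove injectivity by induction on $\dim X$. First I would reduce to the case that $X$ is irreducible: since $\phi$ is quasi-finite it preserves dimensions of images of components, and the injectivity on the dense open $X\setminus Y$ implies that $\phi$ permutes the irreducible components of $X$. After replacing $\phi$ by a suitable iterate and restricting to a single component we may assume $X$ is irreducible.

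With $X$ irreducible, quasi-finiteness gives $\phi$ a well-defined generic degree, and the injectivity on $X\setminus Y$ forces this degree to equal $1$, so $\phi$ is birational. Let $\nu:\tilde X\to X$ be the normalization. The universal property lifts $\phi$ to a morphism $\tilde\phi:\tilde X\to\tilde X$ with $\nu\tilde\phi=\phi\nu$. One checks that $\tilde\phi$ is quasi-finite and injective on the complement of the proper closed subvariety $\nu^{-1}(Y\cup X_{\mathrm{sing}})$, where $X_{\mathrm{sing}}$ is the non-normal locus of $X$; the key point is that $\nu$ is an isomorphism over the normal locus so the injectivity of $\phi$ off $Y$ transfers to $\tilde\phi$ off $\nu^{-1}(Y\cup X_{\mathrm{sing}})$. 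Since $\tilde X$ is normal, Zariski's Main Theorem applied to the quasi-finite birational $\tilde\phi$ produces an open immersion, which is in particular injective, and Ax's theorem then gives that $\tilde\phi$ is an automorphism of $\tilde X$.

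To descend the result to $\phi$, I would use the fiber-counting identity
\[
|\nu^{-1}(y)| \;=\; \sum_{x\in\phi^{-1}(y)}|\nu^{-1}(x)|,
\]
which follows from the bijection $\tilde\phi^{-1}:\nu^{-1}(y)\to\nu^{-1}(\phi^{-1}(y))$ induced by the automorphism $\tilde\phi$. When $y$ lies in the normal locus of $X$ the left-hand side equals $1$, so $\phi^{-1}(y)$ consists of a single normal point; hence $\phi$ is injective over the normal locus of $X$ and sends $X_{\mathrm{sing}}$ into itself. Since the non-injective source locus of $\phi$ is contained in $\phi^{-1}(\phi(Y))$, which is a proper closed subvariety (its dimension is at most $\dim Y < \dim X$ by quasi-finiteness), the restriction $\phi|_{X_{\mathrm{sing}}}:X_{\mathrm{sing}}\to X_{\mathrm{sing}}$ is a quasi-finite endomorphism of a variety of strictly smaller dimension, injective off the proper subvariety $X_{\mathrm{sing}}\cap\phi^{-1}(\phi(Y))$ of $X_{\mathrm{sing}}$. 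The inductive hypothesis then yields injectivity of $\phi$ on $X_{\mathrm{sing}}$, and combined with injectivity over the normal locus we conclude that $\phi$ is injective on $X$; a final application of Ax's theorem completes the argument.

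The main obstacle I anticipate is precisely the descent step. The non-normal locus $X_{\mathrm{sing}}$ may be reducible, and its dimension need not be strictly less than that of $Y$, so verifying that $\phi|_{X_{\mathrm{sing}}}$ satisfies the hypotheses of the theorem on the appropriate restricted subvariety requires a careful analysis of the interplay between $Y$, the non-normal locus, and the combinatorics of the $\nu$-fibers versus the $\phi$-fibers. In particular one must rule out the pathological possibility that $\phi(X_{\mathrm{sing}})$ is contained entirely inside $\phi(Y)$, which would force the restricted injectivity locus to be empty; this delicate dimension-and-fiber bookkeeping is where I expect the bulk of the technical work of the proof to occur.
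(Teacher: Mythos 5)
Your first half --- birationality from injectivity on a dense open set, lifting $\phi$ to an endomorphism $\tilde\phi$ of the normalization, and applying the Zariski Main Theorem to the quasi-finite birational $\tilde\phi$ on the normal variety $\tilde X$ followed by Ax's theorem --- is exactly the paper's route (Lemma 2.1, Lemma 2.2, and the one-line proof of the theorem in Section 3). The fiber-counting identity $|\nu^{-1}(y)|=\sum_{x\in\phi^{-1}(y)}|\nu^{-1}(x)|$ is also correct and is precisely the paper's key inequality $n(\phi(x))\ge n(x)$. The problem is the descent, and it is not merely the technical bookkeeping you flag at the end: two of the steps you state as established are false as stated. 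First, from $|\nu^{-1}(x)|=1$ you conclude that $x$ is a normal point, and hence that $\phi$ sends $X_{\mathrm{sing}}$ into itself; but unibranch singularities (e.g.\ cusps) have a one-point $\nu$-fiber, so the branch count does not detect normality and $\phi$ may a priori send a singular point to a normal one. Second, and more seriously, ``$\phi$ injective over the normal locus'' plus ``$\phi|_{X_{\mathrm{sing}}}$ injective'' does not assemble into injectivity of $\phi$ on $X$: a non-normal point $y$ could have one normal and one singular preimage, a configuration ruled out by neither statement. To close this you would need $\phi^{-1}(X_{\mathrm{sing}})\subseteq X_{\mathrm{sing}}$ (equivalently, that normal points map to normal points), which your argument does not provide. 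On top of this sits the issue you do acknowledge: $X_{\mathrm{sing}}\cap\phi^{-1}(\phi(Y))$ need not be a \emph{proper} subvariety of $X_{\mathrm{sing}}$ (e.g.\ if $X_{\mathrm{sing}}\subseteq Y$), so the inductive hypothesis may simply not apply to the restriction.

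The paper (following Kaliman) repairs exactly this by stratifying not by normality but by the branch count itself: let $X_k=\{x: |\nu^{-1}(x)|=k\}$ with $k$ maximal. Your inequality shows $\phi(X_k)\subseteq X_k$; injectivity of $\phi|_{X_k}$ follows from the same count (two disjoint $k$-point fibers cannot inject into a $k$-point fiber); Ax applied to $X_k$ gives surjectivity, which forces $\phi^{-1}(X_k)=X_k$, so $\phi$ restricts to an endomorphism of $X\setminus X_k$ and one inducts downward on $k$ rather than on $\dim X$. Because the strata partition $X$ and are preserved in both directions, injectivity on each stratum genuinely assembles to injectivity on $X$, avoiding all three difficulties above. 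With that replacement your outline becomes the paper's proof; as written, the descent step has a genuine gap.
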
  
 In the case of  $\text{codim}_XY \geq 2$, we consider the case when the field is $ \C$, the field of complex numbers.
\begin{thm}\label{main_thm_codim_case}
Let $\phi:X \longrightarrow X$ be an endomorphism of a complex algebraic variety $X$ and, let $Y$ be a proper closed subvariety of $X$ such that the restriction of $\phi$ to $X \setminus Y$ is injective. If $Y$ has codimension at least $2$ in $X$, then $\phi$ is an automorphism.
\end{thm}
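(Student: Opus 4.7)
The plan is to prove this theorem by reducing it to Theorem \ref{main_thm_quasi_finite_case}: I will show that under the hypotheses, $\phi$ must be quasi-finite, and then the earlier theorem applies. The codim-$\ge 2$ hypothesis enters precisely to rule out positive-dimensional fibers.

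First I observe that $\phi$ is birational, since $\phi|_{X\setminus Y}$ is injective on the dense open $X\setminus Y$ and hence $\phi$ has generic degree one. Next, any positive-dimensional irreducible component $F$ of a fiber of $\phi$ must lie inside $Y$: since $\phi|_F$ is constant, injectivity of $\phi|_{X\setminus Y}$ forces $|F\cap(X\setminus Y)|\le 1$, and irreducibility of $F$ together with the closedness of $Y$ then gives $F\subseteq Y$. Consequently the exceptional locus $\mathrm{Exc}(\phi)\subseteq X$ of positive-dimensional fibers is contained in $Y$, so in particular $\mathrm{codim}_X \mathrm{Exc}(\phi)\ge 2$.

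The crucial remaining step is to upgrade this to $\mathrm{Exc}(\phi)=\emptyset$, i.e., to show that $\phi$ is quasi-finite. The intended mechanism is a purity statement: for a birational morphism of complex algebraic varieties (in sufficient generality), the exceptional locus is, when non-empty, of pure codimension one; combined with $\mathrm{codim}_X\mathrm{Exc}(\phi)\ge 2$, this forces $\mathrm{Exc}(\phi)=\emptyset$. Once $\phi$ is known to be quasi-finite, Theorem \ref{main_thm_quasi_finite_case} immediately concludes that $\phi$ is an automorphism.

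The principal obstacle is justifying the purity statement in the required generality — without assuming $X$ is smooth or $\phi$ is proper. I anticipate handling this in two stages: first reduce to the case where $X$ is normal via the normalization and the universal property that lets $\phi$ lift to an endomorphism of $X^{\nu}$ preserving the hypotheses; and then invoke complex-analytic tools, such as Remmert's proper mapping theorem applied to a local Stein-type factorization of the analytification $\phi^{\mathrm{an}}$, together with a Hartogs-type extension theorem to extend the rational inverse $\phi^{-1}$ across the codim-$\ge 2$ locus $\phi(Y)\subseteq X$. The need for these analytic ingredients is presumably what restricts the theorem to the base field $\mathbb C$, in contrast to the quasi-finite case which works in arbitrary characteristic zero.
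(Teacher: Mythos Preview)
Your broad strategy---reduce to the normal case, establish birationality, prove quasi-finiteness, and then invoke Theorem~\ref{main_thm_quasi_finite_case}---is exactly the shape of the paper's argument. The divergence is in how you propose to obtain quasi-finiteness, and there the proposal has a real gap.

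You yourself flag the purity claim as the principal obstacle, and your proposed remedy is essentially a Hartogs-type extension of the rational inverse $\phi^{-1}$ across a codimension-$\ge 2$ locus. But this is precisely the mechanism that Kaliman already used to settle the affine and complete cases, and it is precisely what does \emph{not} extend to an arbitrary variety $X$: Hartogs/Riemann-extension controls holomorphic \emph{functions} across thin sets on a normal space, but extending a \emph{morphism} into $X$ requires $X$ itself to have a suitable extension property (e.g.\ affine, so that one extends coordinate functions; or complete, via valuative-type arguments). For general $X$ there is no such property available, and the purity-of-exceptional-locus statement you want is likewise unavailable without smoothness or properness (small contractions on normal threefolds give birational morphisms with codimension-$2$ exceptional locus). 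So your sketch, as written, reproduces the known special cases but does not close the gap for general $X$.

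The paper circumvents this by working locally at the level of analytic stalks rather than attempting a global extension of $\phi^{-1}$. The key new input is that for every $p\in X$ the analytic stalk map $\phi^{\mathrm{an}}_p:\cH_{X,\phi(p)}\to\cH_{X,p}$ is an \emph{integral} ring extension; this is obtained by pushing a germ $f$ at $p$ to a holomorphic function on $\phi(C)\cap V$, extending it to a \emph{weakly holomorphic} function on the analytic closure $\overline{\phi(C)}$ (using that $Z$ has codimension $\ge 2$), and then using that weakly holomorphic germs are integral over holomorphic germs. From integrality one deduces that the \emph{algebraic} stalk maps $\cO_{X,\phi(p)}\to\cO_{X,p}$ are isomorphisms (via normality and comparison of completions), hence $\phi$ is flat, hence all fibers have the same dimension, namely zero. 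This is how quasi-finiteness is actually obtained; the Hartogs-type input is used only for functions on normal analytic germs, never to extend a map into $X$.
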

\hf In \Cref{main_thm_codim_case} we restrict ourselves to the case when the underlying field is $\C$. The main reason is that we use the tools from theory of analytic spaces to prove it. \\
\textbf{Notation:} The field $k$ is always assumed to be algebraically closed field of characteristic zero, unless otherwise specified. By algebraic variety over $k$, we mean an integral separated scheme of finite type over $k$ which we denote by $\left( X, \cO_X\right)$. In the case of $k=\C$, we need to consider the corresponding analytic space to $\left( X, \cO_X\right)$ which we denote by $\left(X^{\text{an}},\cH_X\right)$.
The open sets in Zariski topology and analytic topology will be denoted by Z-open and open respectively in order to distinguish the topologies as well. For any subset $A$ of an algebraic variety $X$, the closure of $A$ in the analytic topology will be denoted by $\overline{A}$. For a sheaf of ring $\mathcal{F}$ on $X$, we denote the associated ring over an open subset $U$ of $X$ by $\Gamma\left( U,\mathcal{F}\right)$. Also, the stalk at each point $x\in X$ is denoted by $\mathcal{F}_x$.\\ \\
\textbf{Acknowledgement:} I would like to thank Chitrabhanu Chaudhuri for several fruitful discussions. Discussions with Shulim Kaliman over email were very productive. I would also like to thank Ritwik Mukherjee for several suggestions. \\
\section{Reduction to normal case}
\begin{lmm}\label{birational_lemma}
Let $\phi: X \longrightarrow X$ be an endomorphism of an algebraic variety $X$ over $k$ and, let $Y$ be a proper closed subvariety of $X$ such that the restriction of $\phi$ to $X \setminus Y$ is injective. Then $\phi$ is birational.
\end{lmm}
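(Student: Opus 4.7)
The plan is to show that $\phi$ is generically finite of degree $1$. Birationality will then follow since the degree of a dominant morphism between irreducible varieties equals the degree of the induced extension of function fields, and in characteristic zero this coincides with the cardinality of a general geometric fibre.

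First, I would check that $\phi$ is dominant. Since $\phi\big|_{X\setminus Y}$ is injective, its set-theoretic fibres contain at most one point, and so this restriction is quasi-finite. A quasi-finite morphism from the irreducible variety $X\setminus Y$, which has dimension $\dim X$, has image whose Zariski closure has the same dimension; hence $\overline{\phi(X\setminus Y)} = X$ and $\phi$ is dominant. Consequently $\phi^* k(X) \subset k(X)$ is a finite field extension, separable since $\operatorname{char} k = 0$; set $d := [k(X):\phi^* k(X)]$.

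Next, I would exploit the smallness of $Y$. As $Y$ is a proper closed subvariety of the irreducible variety $X$, we have $\dim Y < \dim X$, and by Chevalley's theorem $\phi(Y)$ is a constructible set with $\dim\overline{\phi(Y)} \leq \dim Y < \dim X$. Choose a dense Z-open subset $U \subset X$ disjoint from $\overline{\phi(Y)}$ and small enough that $\phi^{-1}(U) \to U$ is finite \'etale of degree $d$ (available by the generic structure of a dominant generically finite separable morphism between irreducible varieties). For any $y \in U$, the fibre $\phi^{-1}(y)$ consists of exactly $d$ points, all of which must lie in $X \setminus Y$.

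Finally, the injectivity of $\phi\big|_{X\setminus Y}$ forces $|\phi^{-1}(y)| \leq 1$ for such $y$, so $d \leq 1$; combined with $d \geq 1$ from dominance, this gives $d = 1$, and $\phi$ is birational. I do not anticipate any serious obstacle here: the whole argument is a clean combination of Chevalley's theorem on images, generic finiteness of a dominant morphism between equidimensional varieties, and separability of function field extensions in characteristic zero. If any step requires care, it is the verification that $\phi|_{X\setminus Y}$ is genuinely quasi-finite, but this follows immediately from set-theoretic injectivity between varieties.
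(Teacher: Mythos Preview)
Your proof is correct and follows essentially the same strategy as the paper: establish dominance from injectivity on the dense open $X\setminus Y$, then use that the generic fibre has cardinality equal to the degree of the function field extension to force degree $1$. The only cosmetic difference is that the paper works directly with the restriction $\widetilde{\phi}\colon X\setminus Y\to X$ (so its fibres automatically lie in $X\setminus Y$), whereas you work with $\phi$ itself and invoke Chevalley to move the generic fibre off $\phi(Y)$; both routes arrive at the same conclusion with the same input.
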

\begin{proof}
Note that the morphism $\phi$ restricted to $X \setminus Y$, $$\widetilde{\phi} : X \setminus Y \longrightarrow X$$ is injective and $X \setminus Y$ is Z-open in $X$ together imply that $\widetilde{\phi}$ is a dominant morphism. Hence, the induced field extension $k(X ) \rightarrow k \left( X \setminus Y\right)$ between the function fields is algebraic. If $\left[ k(X):k(X \setminus Y)  \right]=n$, then there exists a non-empty Z-open set $U$ of $X$ such that the inverse image of every point of $U$ is made up of exactly $n$ points of $X \setminus Y$. Since $\widetilde{\phi}$ is injective, this shows that $n=1$. Hence, we conclude that $\widetilde{\phi}$ is birational. This shows that $\phi: X \longrightarrow X$ is also birational.
\end{proof}
\begin{lmm}\label{normal_case_reducing}
\Cref{main_conj} is true provided it is true for normal algebraic varieties.
\end{lmm}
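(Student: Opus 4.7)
The plan is to pass to the normalization $\nu:\widetilde{X}\to X$ and reduce to an endomorphism of the normal variety $\widetilde{X}$, to which the assumed normal case of \Cref{main_conj} will apply. Since $\phi$ is birational (hence dominant) by \Cref{birational_lemma}, the composition $\phi\circ\nu:\widetilde{X}\to X$ is a dominant morphism from the normal variety $\widetilde{X}$, so the universal property of the normalization produces a unique lift $\widetilde{\phi}:\widetilde{X}\to\widetilde{X}$ with $\nu\circ\widetilde{\phi}=\phi\circ\nu$. The overall pattern will be: (i) choose $\widetilde{Y}\subsetneq\widetilde{X}$ so that $(\widetilde{X},\widetilde{\phi},\widetilde{Y})$ satisfies the hypotheses of \Cref{main_conj}; (ii) apply the assumed normal case to conclude that $\widetilde{\phi}$ is an automorphism; (iii) descend this to the statement that $\phi$ itself is an automorphism.

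For step (i), let $N\subset X$ be the non-normal locus; it is a proper closed subvariety since $X$ is generically normal in characteristic zero. I would take $\widetilde{Y}:=\nu^{-1}(Y\cup N)$. Injectivity of $\widetilde{\phi}$ on $\widetilde{X}\setminus\widetilde{Y}$ is a short check: if $\widetilde{\phi}(p)=\widetilde{\phi}(q)$ with $p,q\notin\widetilde{Y}$, applying $\nu$ yields $\phi(\nu(p))=\phi(\nu(q))$ with $\nu(p),\nu(q)\in X\setminus Y$, so by the hypothesis on $\phi$ we have $\nu(p)=\nu(q)$, and since $\nu$ is an isomorphism over $X\setminus N$ while $p,q\notin\nu^{-1}(N)$, we conclude $p=q$. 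In the quasi-finite case, $\widetilde{\phi}$ inherits quasi-finiteness from the relation $\nu\widetilde{\phi}=\phi\nu$ and the finiteness of $\nu$, since $\widetilde{\phi}^{-1}(z)\subseteq\nu^{-1}(\phi^{-1}(\nu(z)))$ is then finite. In the codimension $\geq 2$ case, finiteness of $\nu$ preserves codimension, so the hypothesis reduces to verifying $\text{codim}_X(Y\cup N)\geq 2$.

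The step I expect to require the most care is this last codimension check, because the non-normal locus can a priori have codimension one in $X$ (for instance along the pinch locus of Whitney's umbrella), so the naive choice of $\widetilde{Y}$ may violate the codimension-two hypothesis. I would try to resolve this either by shrinking $N$ to the locus where $\nu$ actually fails to be bijective on underlying sets --- which can have strictly higher codimension --- or by combining the codimension assumption on $Y$ with a form of Zariski's main theorem to force the exceptional locus into codimension at least two. Step (iii) should be the comparatively routine half of the argument: surjectivity of $\phi$ follows from $\phi\circ\nu=\nu\circ\widetilde{\phi}$ being surjective, and then \Cref{birational_lemma}, the uniqueness of the normalization, and (in the quasi-finite case) Zariski's main theorem together upgrade this to the conclusion that $\phi$ is an isomorphism.
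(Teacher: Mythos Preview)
Your overall strategy---lift to the normalization, apply the normal case, then descend---matches the paper's, but both halves of the execution have real gaps.

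\textbf{Step (i).} Your choice $\widetilde{Y}=\nu^{-1}(Y\cup N)$ forces the codimension problem you yourself flag, and neither proposed fix works: the locus where $\nu$ fails to be bijective can still have codimension one (e.g.\ a nodal curve times $\mathbb{A}^n$), and no form of Zariski's Main Theorem pushes it higher. The paper sidesteps this by taking simply $\widetilde{Y}=\nu^{-1}(Y)$ and proving injectivity of the lift $\rho$ on $\widetilde{X}\setminus\nu^{-1}(Y)$ via ZMT rather than by an elementwise check. The point is that $\rho|_{\widetilde{X}\setminus\nu^{-1}(Y)}$ is automatically quasi-finite in \emph{both} cases of the conjecture: if $p\notin\nu^{-1}(Y)$ and $\rho(p)=z$, then $\phi(\nu(p))=\nu(z)$ with $\nu(p)\notin Y$, so injectivity of $\phi$ on $X\setminus Y$ pins down $\nu(p)$ uniquely and the whole fiber sits in the finite set $\nu^{-1}(\nu(p))$. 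Since $\rho$ is birational and $\widetilde{X}$ is normal, ZMT makes this restriction an open immersion. Now $\nu^{-1}(Y)$ has codimension $\geq 2$ whenever $Y$ does, and $\rho$ is quasi-finite whenever $\phi$ is, so the normal-case hypothesis applies with no enlargement of $Y$.

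\textbf{Step (iii).} This is not the routine half; it is where the substance is. Surjectivity plus birationality does not force an isomorphism when the target is non-normal, ZMT does not apply to $\phi:X\to X$ for that same reason, and ``uniqueness of normalization'' does not produce an inverse: knowing $\widetilde{\phi}^{-1}$ exists on $\widetilde{X}$ says nothing about whether it respects the fibers of $\nu$ and hence descends. The paper uses Kaliman's fiber-counting trick: set $n(x)=|\nu^{-1}(x)|$, $k=\max_x n(x)$, and $X_k=\{x:n(x)=k\}$. Bijectivity of $\rho$ together with $\rho(\nu^{-1}(x))\subseteq\nu^{-1}(\phi(x))$ gives $n(\phi(x))\geq n(x)$, so $\phi(X_k)\subseteq X_k$; and if $x_1\neq x_2$ in $X_k$ had $\phi(x_1)=\phi(x_2)$, the disjoint sets $\rho(\nu^{-1}(x_1))$ and $\rho(\nu^{-1}(x_2))$, each of size $k$, would sit inside $\nu^{-1}(\phi(x_1))$ of size $k$---impossible. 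So $\phi|_{X_k}$ is injective, hence surjective by Ax; replace $X$ by $X\setminus X_k$ and induct downward to get $\phi$ injective on all of $X$, whence an automorphism by Ax.
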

\begin{proof}
Assume $\nu:\widetilde{X} \longrightarrow X$ is a normalization of $X$. The endomorphism $\phi$ as in \Cref{main_conj} induces an endomorphism $\rho:\widetilde{X} \longrightarrow \widetilde{X}$. By \Cref{birational_lemma}, $\phi$ is birational, so is $\rho$. In any case, $\rho$ restricted to $\widetilde{X} \setminus \nu^{-1}(Y)$ is quasi-finite. Hence, by the Zariski Main Theorem \cite[Section 9, chapter III]{Red_book}, $\rho$ restricted to $\widetilde{X} \setminus \nu^{-1}(Y)$ is injective, and hence $\rho:\widetilde{X} \longrightarrow \widetilde{X}$ is an automorphism by the hypothesis. We get the following commutative diagram:
\begin{displaymath}
\xymatrix{
\widetilde{X} \ar[r] ^{\rho}_{\simeq} \ar[d]_{\nu} & \widetilde{X} \ar[d]^{\nu}\\
X \ar[r]_{\phi} & X
}
\end{displaymath}
 The rest of the proof is essentially due to Kaliman \cite[Proof of Lemma 2]{Kaliman}. We prove it for the sake of completeness. Let $n(x)$ be the number of points of the fibre $\nu^{-1}(x)$ for $x \in X$, $k= \max \limits_{x \in X}n(x)$, and $X_k$ be the subvariety of $X$ consists of all points $x \in X$ such that $n(x)=k$. As $\rho(\nu^{-1}(x)) \subseteq \nu^{-1}(\phi(x))$ for all $x \in X$, and $\rho$ is bijective, $n(\phi(x)) \geq n(x)$. Hence $\phi(x) \in X_k$ for each $x \in X_k$. Hence, $\phi$ restricted to $X_k$ is an endomorphism. By the same reasoning, we conclude that $\phi:X_k \longrightarrow X_k$ is injective, and hence surjective by the theorem of Ax. On the other hand, $\phi:X \setminus X_k \longrightarrow X\setminus X_k$ is an endomorphism and we replace $X$ by $X \setminus X_k$ in the above argument to conclude that $\phi(X_{k-1}) \subseteq X_{k-1}$ and the restriction map $\phi: X_{k-1} \longrightarrow X_{k-1}$ is injective. Inductively we conclude that $\phi:X \longrightarrow X$ is injective, and hence an automorphism by the theorem of Ax.    
\end{proof}

\section{Proof of the Theorems}
\hf Without loss of generality, we assume $X$ is normal from now onwards. \Cref{main_thm_quasi_finite_case} is now easy to prove.\\
 \begin{myproof}{\Cref{main_thm_quasi_finite_case}}
 Note that $\phi$ is birational, quasi-finite and $X$ is normal; by the Zariski Main Theorem, $\phi$ is injective, and hence an automorphism by the theorem of Ax.
 \end{myproof}
 \begin{rem}
 \Cref{main_thm_quasi_finite_case} can be extended from varieties to those schemes for which the theorem of Ax hold. 
 \end{rem}
 \hf From now onwards, we focus on the case where $\text{codim}_XY \geq 2$ and $k=\C$. The following lemma, due to Kaliman, will be useful for our purpose.
\begin{lmm}
Let $\phi$ be an endomorphism of an algebraic variety $X$ over $k$. Also assume that $Y$ be a closed subvariety of $X$ of codimension at least $2$ such that the restriction map $\phi:X \setminus Y \longrightarrow X$ is injective. Let $Z= X \setminus \phi (X \setminus Y)$. Then $Z$ is a closed subvariety of $X$ and $\dim Y= \dim Z$.
\end{lmm}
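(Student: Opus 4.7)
I will establish three things in order: $Z$ is Zariski closed in $X$; the set-theoretic identity $\phi^{-1}(Z)=Y$; and the dimension equality $\dim Y=\dim Z$.

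\emph{Closedness of $Z$.} By \Cref{birational_lemma}, $\phi$ is birational, so the restriction $\phi|_{X\setminus Y}\colon X\setminus Y\to X$ is a birational morphism. It is injective by hypothesis, hence quasi-finite, so Zariski's Main Theorem applied to the normal variety $X$ (recall \Cref{normal_case_reducing}) forces $\phi|_{X\setminus Y}$ to be an open immersion. Thus $V:=\phi(X\setminus Y)$ is Z-open in $X$ and $Z=X\setminus V$ is Z-closed.

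\emph{The identity $\phi^{-1}(Z)=Y$.} Let $\psi\colon V\to X\setminus Y$ denote the inverse of this open immersion. The two morphisms $\psi\circ\phi\colon\phi^{-1}(V)\to X$ and the canonical inclusion $\phi^{-1}(V)\hookrightarrow X$ agree on the dense open $X\setminus Y\subseteq\phi^{-1}(V)$, so by separatedness of $X$ they coincide on all of $\phi^{-1}(V)$. In particular $\psi(\phi(x))=x$ for every $x\in\phi^{-1}(V)$; since $\psi$ takes values in $X\setminus Y$, this forces $\phi^{-1}(V)\subseteq X\setminus Y$. The reverse inclusion is clear, so $\phi^{-1}(V)=X\setminus Y$, equivalently $\phi^{-1}(Z)=Y$ and $\phi(Y)\subseteq Z$.

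\emph{The dimension equality.} The standard bound $\dim\phi(Y)\le\dim Y$ is immediate, so it suffices to show $\dim Z\ge\dim Y$ and $\dim Z\le\dim Y$. For $\dim Z\ge\dim Y$: upper-semicontinuity of fibre dimension applied to $\phi$, together with the fact that fibres over $V$ are singletons (by the previous step), confines the positive-dimensional-fibre locus of $\phi$ to $Y$; a dimension count combining this with $\operatorname{codim}_XY\ge 2$ shows this locus is a proper closed subvariety of $Y$, so $\phi|_Y$ is generically quasi-finite onto $\phi(Y)$, giving $\dim\phi(Y)=\dim Y$ and hence $\dim Z\ge\dim Y$. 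For $\dim Z\le\dim Y$: decompose $Z=\phi(Y)\sqcup(Z\setminus\phi(X))$. The first summand is controlled by $\dim\phi(Y)\le\dim Y$; for the second, I would compactify $X\subseteq\overline X$ projectively, extend $\phi$ to a rational self-map of $\overline X$, and resolve its indeterminacy to obtain a proper birational $p\colon\widetilde X\to\overline X$ together with a proper morphism $q\colon\widetilde X\to\overline X$ extending $\phi$. Then any $z\in Z\setminus\phi(X)$ has $q^{-1}(z)\subseteq p^{-1}(\overline X\setminus X)$, so $Z\setminus\phi(X)\subseteq q(p^{-1}(\overline X\setminus X))$, whose dimension is controlled by $\dim Y$ via the codim $\ge 2$ hypothesis and the normality of $X$.

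\emph{Main obstacle.} The most delicate part is bounding $\dim(X\setminus\phi(X))\le\dim Y$. Since $\phi$ is not proper, $\phi(X)$ is only constructible and the missing image can a priori contribute to $\dim Z$ beyond what $\phi(Y)$ provides; the codim $\ge 2$ hypothesis on $Y$ (rather than on $Z$, which is not known in advance) is what prevents this via the compactification argument.
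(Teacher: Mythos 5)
The paper does not actually prove this lemma: its ``proof'' is just the citation \cite[Lemma 3]{Kaliman}, so there is no internal argument to measure yours against. Taken on its own terms, your first two steps are correct, and they coincide with what the paper does in the paragraph immediately following the lemma: birationality (\Cref{birational_lemma}) plus injectivity plus the Zariski Main Theorem on the normal $X$ make $\phi|_{X\setminus Y}$ an open immersion, so $Z$ is Z-closed; and your separatedness argument giving $\phi^{-1}(V)=X\setminus Y$, hence $\phi^{-1}(Z)=Y$ and the decomposition $Z=\phi(Y)\sqcup\bigl(X\setminus\phi(X)\bigr)$, is clean and correct.

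The dimension equality, however, is not proved, and the gaps are genuine in both directions. For $\dim Z\ge\dim Y$ you reduce to showing that $\phi|_Y$ is generically quasi-finite on a top-dimensional component of $Y$, asserting that ``a dimension count'' using $\text{codim}_XY\ge 2$ confines the positive-dimensional-fibre locus to a proper closed subset of $Y$. No such count is given, and knowing that all positive-dimensional fibres of $\phi$ lie inside $Y$ does not prevent $\phi$ from contracting an entire top-dimensional component of $Y$. The mechanism you propose is not forced by the truth of the statement either: in the codimension-one model $\phi(x,y)=(x,xy)$ on $\mathbb{A}^2$ with $Y=\{x=0\}$ one has $\dim Z=\dim Y$ even though $\phi(Y)$ is a single point --- the equality is carried entirely by $X\setminus\phi(X)$. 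So if generic finiteness of $\phi|_Y$ really does follow from $\text{codim}_XY\ge 2$, that needs an actual argument, and it is the heart of this direction. For $\dim Z\le\dim Y$ you correctly isolate the crux --- bounding $\dim\bigl(q(p^{-1}(\overline{X}\setminus X))\cap X\bigr)$ --- and then simply assert it ``is controlled by $\dim Y$.'' A priori $p^{-1}(\overline{X}\setminus X)$ is a divisor in $\widetilde{X}$ whose $q$-image can meet $X$ in codimension one; excluding that is precisely the content of the lemma (it is what gives $\text{codim}_XZ\ge 2$, the only consequence the paper uses later), and it needs a further input, e.g.\ that $q$, being proper and birational onto the normal completion, is an isomorphism over every codimension-one point, so that a divisorial component of $Z$ would have to arise from a divisor lying over the boundary, which must then be ruled out. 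As written, both halves of the dimension statement remain assertions, so the proof is incomplete exactly where the lemma has content.
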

\begin{proof}
\cite[Lemma 3]{Kaliman}.
\end{proof}

\hf  Note that the map $\phi: X \setminus Y \rightarrow X$ is a injective birational map and hence, by the Zariski Main Theorem, it is an embedding. Therefore $\phi (X \setminus Y)$ is Z-open in $X$ and $Z$ is Z-closed. Hence, we get that the morphism 
\begin{align}\label{psi_map}
\phi:X \longrightarrow X
\end{align}
with the property that the restriction morphism $\phi:X \setminus Y \longrightarrow X \setminus Z$ is an isomorphism.  Let us denote the analytic spaces $X^{\text{an}} \setminus Y^{\text{an}}$ and $X^{\text{an}} \setminus Z^{\text{an}}$ by $(U,\cH_U)$ and $(V,\cH_V)$ respectively. Also, denote the inclusion of $U \hookrightarrow X^{\text{an}} $ and $V \hookrightarrow X^{\text{an}}$ by $i_U$ and $i_V$ respectively. Hence, we will get the following diagram of morphism of locally ringed spaces:

\begin{equation}\label{com_diag_1}
\begin{gathered}
\xymatrix{
(U,\cH_U) \ar[r]^{\phi} \ar[d]_{i_U} & (V,\cH_V) \ar[d]^{i_V}\\
(X^{\text{an}},\cH_X) \ar[r]_{\phi} & (X^{\text{an}},\cH_X)
}
\end{gathered}
\end{equation}
On the level of structures, we will get the following commutative diagram of morphism of sheaf of rings over $X^{\text{an}}$:
\begin{equation}\label{struc_com_diag_1}
\begin{gathered}
\xymatrix{
\cH_X \ar[r] \ar[d] & \phi_{*}\cH_X\ar[d] \\
i_{V_*}\cH_V \ar[r] & i_{V_*}\phi_{*}\cH_U
}
\end{gathered}
\end{equation}
\hf  Observe that, both $U$ and $V$ are open subsets of $X^{\text{an}}$ with the property that $X^{\text{an}} \setminus U$ and $X^{\text{an}} \setminus U$ both are closed subvarieties of $X^{\text{an}}$ of codimension at least $2$. The variety $X$ being normal in Zariski topology is irreducible. Therefore, both $U$ and $V$ are open and dense in Zariski topology of $X$, so they are dense in the analytic topology of $X$ as well \cite[Proposition 5]{GAGA}. Therefore, both $U$ and $V$ always have non-empty intersection individually with any non-empty open subset of $X^{\text{an}} $. Again, $X^{\text{an}} $ is normal follows from normality of the algebraic variety $X$. Let $W$ be a non-empty open subset of $X^{\text{an}}$; then $W$ is normal as well. Also, both $W \cap U \neq \emptyset$ and $W \cap V \neq \emptyset$; both $W \cap Y^{\text{an}} $ and $W \cap Z^{\text{an}}$ are closed analytic subset of $W$ of codimension at least $2$. This is because analytically open subsets of an algebraic variety $X$ is analytic of dimension $\dim X$; in particular, $\dim W \cap Y^{\text{an}}=\dim Y^{\text{an}}$, $\dim W \cap Z^{\text{an}}=\dim Z^{\text{an}}$ and $\dim W=\dim X^{\text{an}}$. According to Riemann's 2nd removable singularity theorem \cite[Proposition 4, Chapter VI]{Narasimhan}, both the natural ring maps $\Gamma \left( W, \cH_X\right) \rightarrow  \Gamma \left( W \cap U, \cH_X\right)$ and $\Gamma \left( W, \cH_X\right) \rightarrow  \Gamma \left( W \cap V, \cH_X\right)$ are isomorphism. Hence, we get both the natural morphisms of sheaf of rings $\cH_X \rightarrow i_{U_*} \cH_U$ and $\cH_X \rightarrow i_{V_*} \cH_V$ over $X^{\text{an}}$ are isomorphism. On the other hand, the isomorphism $\phi:(U,\cH_U)\rightarrow (V,\cH_V)$ produces the natural morphism of sheaf of rings $ \cH_V  \rightarrow  \phi_* \cH_U$ over $X^{\text{an}} $ is an isomorphism. Therefore, we get all the arrow in the diagram \eqref{struc_com_diag_1} except the horizontal one on the top are isomorphism. Hence the morphism of sheaf of rings over  $X^{\text{an}} $
\begin{align} \label{push_forward_isom}
 \cH_X  \longrightarrow \phi_{*}\cH_X
\end{align}
 is an isomorphism. \\
 \hf The next proposition plays a central role to prove \Cref{main_thm_codim_case}.
 \begin{prp} \label{intersection_commutes_with_image_lemma}
Consider $X, \phi$ as in \cref{psi_map}. Then $\phi(A \cap U)=\phi(A) \cap V$ for any open set $A$ in $X^{ \text{an}}$.
 \end{prp}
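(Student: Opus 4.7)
The plan is to prove the identity by splitting it into two inclusions. The inclusion $\phi(A \cap U) \subseteq \phi(A) \cap V$ is immediate, since $\phi(U) = V$: every element of $\phi(A \cap U)$ obviously lies in both $\phi(A)$ and $V$. The real content is the reverse inclusion $\phi(A) \cap V \subseteq \phi(A \cap U)$, and I would reduce it to a single key claim.

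The key claim is that $\phi(Y^{\text{an}}) \cap V = \emptyset$, equivalently $\phi^{-1}(V) \subseteq U$. Once this is in hand the reverse inclusion is automatic: if $v = \phi(a) \in \phi(A) \cap V$ for some $a \in A$, then $a \in \phi^{-1}(V) \subseteq U$, hence $a \in A \cap U$ and $v \in \phi(A \cap U)$. So everything is funneled through proving that no point of $Y^{\text{an}}$ can map into $V$.

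The heart of the argument, and where I expect the main obstacle to lie, is proving $\phi(Y^{\text{an}}) \cap V = \emptyset$. I would do this by contradiction, using a density-plus-Hausdorff argument in the analytic category. Suppose there were $y \in Y^{\text{an}}$ with $\phi(y) = v \in V$. Since $\phi \colon U \to V$ is an isomorphism of analytic spaces (as established just before the proposition), there is a unique $u \in U$ with $\phi(u) = v$, and $u \neq y$ because $y \notin U$. The hypothesis $\text{codim}_X Y \geq 2$ makes $U$ dense in $X^{\text{an}}$ (a fact already used in the text), so I can pick a sequence $u_n \in U$ with $u_n \to y$. Continuity of $\phi$ gives $\phi(u_n) \to v$, and since $V$ is open we have $\phi(u_n) \in V$ eventually. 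Applying the continuous analytic inverse $\phi^{-1} \colon V \to U$ then yields $u_n = \phi^{-1}(\phi(u_n)) \to \phi^{-1}(v) = u$ in $U$, and hence in $X^{\text{an}}$. Thus $u_n$ converges simultaneously to $u$ and to $y$; since $X^{\text{an}}$ is Hausdorff (because $X$ is separated), $y = u$, contradicting $y \notin U$.

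Two features of this approach are essential and not available in the Zariski topology: density of $U$ in the analytic topology (which uses $\text{codim}_X Y \geq 2$ in a crucial way) and Hausdorffness of $X^{\text{an}}$. This is consistent with the paper's restriction to $k = \mathbb{C}$ in \Cref{main_thm_codim_case}. Beyond this density/Hausdorff step the proof is pure bookkeeping, so I do not foresee any further technical difficulty in formalising the argument.
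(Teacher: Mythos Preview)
Your argument is correct, and it takes a genuinely different route from the paper. The paper proves the proposition directly: for $p\in A$ with $\phi(p)\in V$ it chooses an open $B$ with $p\in B$ and $\overline{B}\subseteq A$ (using that $X^{\text{an}}$ is Hausdorff and locally compact), shows $\phi(p)\in\overline{\phi(B\cap U)}$ by density of $U$, and then uses that $\phi|_U$ is a homeomorphism onto $V$ to conclude $\overline{\phi(B\cap U)}\cap V\subseteq\phi(A\cap U)$. Only \emph{afterwards} does the paper deduce $\phi^{-1}(V)=U$ (its \Cref{inverse_image_coincide_cor}), and it does so through considerably heavier machinery: surjectivity of the analytic stalk maps, a result from \cite{Narasimhan} forcing isolated fibre points, and the Zariski Main Theorem. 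You reverse this logical order: you prove $\phi^{-1}(V)\subseteq U$ first, by the elementary observation that the continuous maps $\psi\circ\phi$ and $\mathrm{id}$ (where $\psi:V\to U$ is the inverse of $\phi|_U$) agree on the dense subset $U\cap\phi^{-1}(V)$ of $\phi^{-1}(V)$ and $X^{\text{an}}$ is Hausdorff, whence they agree everywhere; the proposition then falls out for free. Your approach is more economical and in fact subsumes the paper's \Cref{inverse_image_coincide_cor} without any appeal to stalks or the Zariski Main Theorem. One small remark: density of $U$ in $X^{\text{an}}$ requires only that $Y$ be a proper closed subvariety of the irreducible variety $X$, not that $\text{codim}_XY\geq 2$; the codimension hypothesis is used elsewhere in the paper (for the Riemann extension arguments and to ensure $V$ is also dense), but not for this step.
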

 \begin{proof}
 Let $A$ be an open subset of $X^{ \text{an}}$. Also assume $p\in A$; we will first show that $\phi(p) \in \overline{\phi(A \cap U)}$. If this is not the case, then there exist open set $B$ of $ X^{ \text{an}}$ such that $\phi(p) \in B$ and $B \cap \phi(A \cap U) = \emptyset $. But $p \in \phi^{-1}(B) \cap A$, therefore $ \phi^{-1}(B) \cap (A \cap U) \neq \emptyset$ by the density property of $U$, i.e. $ \phi^{-1}(B) \cap \phi^{-1}\phi(A \cap U) \neq \emptyset$, which is a contradiction. Therefore we conclude that $\phi(p) \in \overline{\phi(A \cap U)}$ for every open set $A$ of  $X^{ \text{an}}$ containing $p$. \\
\hf Next, we will show that $\phi(A) \cap V \subseteq \phi(A \cap U)$. If $p \in A$ with $\phi(p) \in \phi(A) \cap V$, choose an open set $B$ of $X^{ \text{an}}$ containing $p$ such that $\overline{B} \subseteq A$. This can be possible because $X^{ \text{an}}$ is Hausdorff and locally compact space. Hence, $\overline{B \cap U}\cap U \subseteq A \cap U$. As $\phi$ restricted to $U$ is a homeomorphism, $\overline{ \phi(B \cap U)} \cap V \subseteq \phi(A \cap U)$. Since $\phi(p) \in \overline{\phi(B \cap U)}$ according to the discussion above and we assumed $\phi(p)\in V$, so $\phi(p) \in \phi(A \cap U)$. This proves the fact $\phi(A) \cap V \subseteq \phi(A \cap U)$. Again, $\phi(A \cap U) \subseteq \phi(A)\cap V$ is obvious. Therefore $\phi(A \cap U)=\phi(A) \cap V$.
 \end{proof}
 \hf The following corollary is an immediate consequence of \Cref{intersection_commutes_with_image_lemma}.
 \begin{cor}\label{inverse_image_coincide_cor}
 Consider $X, \phi$ as in \cref{psi_map}. Then $\phi^{-1}(V)=U$ where both $U$ and $V$ are defined as above.
 \end{cor}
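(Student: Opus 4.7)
The plan is to deduce the corollary from \Cref{intersection_commutes_with_image_lemma} by a short point-set argument. The inclusion $U \subseteq \phi^{-1}(V)$ is immediate because $\phi$ restricts to an isomorphism $U \to V$, so the real content is the reverse inclusion $\phi^{-1}(V) \subseteq U$.

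For that, I would take any $p \in \phi^{-1}(V)$ and assume, toward a contradiction, that $p \notin U$. The idea is that even though $p$ lies outside $U$, the proposition forces $\phi(p)$ to be hit by points of $U$ arbitrarily close to $p$, and then injectivity of $\phi|_U$ collapses all those witnesses to a single point which must coincide with $p$. Concretely, since $\phi(p) \in V$ and $\phi$ restricts to a bijection $U \to V$, there is a unique point $q_0 \in U$ with $\phi(q_0) = \phi(p)$.

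Now, for any open neighborhood $A$ of $p$ in $X^{\text{an}}$, \Cref{intersection_commutes_with_image_lemma} gives
\[
\phi(p) \in \phi(A) \cap V = \phi(A \cap U),
\]
so there exists $q \in A \cap U$ with $\phi(q) = \phi(p)$. Injectivity of $\phi$ on $U$ forces $q = q_0$, so $q_0 \in A$ for every open neighborhood $A$ of $p$. Since $X^{\text{an}}$ is Hausdorff, this implies $q_0 = p$, and since $q_0 \in U$ we conclude $p \in U$, contradicting the assumption. Thus $\phi^{-1}(V) \subseteq U$, which combined with the trivial reverse inclusion gives the equality.

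The argument is essentially routine once \Cref{intersection_commutes_with_image_lemma} is in hand; the only conceptual step is the use of uniqueness of the preimage in $U$ together with Hausdorffness to pin down $q_0 = p$. I expect no real obstacle beyond making sure the open set $A$ is chosen generically enough (any open neighborhood works) and that the Hausdorff separation property of $X^{\text{an}}$ is invoked explicitly.
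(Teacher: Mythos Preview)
Your argument is correct and considerably more elementary than the paper's. The paper proceeds quite differently: for $p\in\phi^{-1}(V)$ it first shows that the analytic stalk map $\phi^{\text{an}}_{p}\colon \cH_{X,\phi(p)}\to\cH_{X,p}$ is surjective (using \Cref{intersection_commutes_with_image_lemma} together with the sheaf isomorphisms $\cH_X\simeq i_{U_*}\cH_U$ and $\phi^{-1}\cH_V\simeq\cH_U$), then invokes a result of Narasimhan to deduce that $p$ is isolated in $\phi^{-1}(\phi(p))$, hence that $\phi\colon\phi^{-1}(V)\to V$ is quasi-finite; finally the Zariski Main Theorem gives injectivity of $\phi$ on $\phi^{-1}(V)$, which combined with bijectivity of $\phi|_U$ forces $\phi^{-1}(V)=U$.

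Your route extracts the conclusion directly from the point-set content of \Cref{intersection_commutes_with_image_lemma}: the identity $\phi(A)\cap V=\phi(A\cap U)$ for every open $A\ni p$ produces a witness $q\in A\cap U$ with $\phi(q)=\phi(p)$, injectivity of $\phi|_U$ pins this to a single $q_0$, and Hausdorffness of $X^{\text{an}}$ (already used in the paper) forces $q_0=p$. This avoids stalk computations, Narasimhan's criterion, and the Zariski Main Theorem entirely. The paper's longer argument does produce as byproducts the surjectivity of the stalk map on $\phi^{-1}(V)$ and quasi-finiteness over $V$, but neither is reused downstream --- the proof of \Cref{main_thm_codim_case} only needs the equality $\phi^{-1}(V)=U$ itself to see that fibres over $V$ are zero-dimensional, which your argument provides just as well.
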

 \begin{proof}
 We will first show that for each point $p$ of $\phi^{-1}(V)$, the analytic stalk map $$\phi^{\text{an}}_{p}: \cH_{X,\phi(p)} \longrightarrow \cH_{X,p}$$ is surjective. Indeed, if $A$ is an open subset of $X^{ \text{an}}$, then it follows from the isomorphism $\cH_X \simeq i_{U_*}\cH_U$ that $\Gamma(A, \cH_X) \simeq \Gamma(A\cap U, \cH_X)$ and from $\phi^{-1}\cH_V \simeq \cH_U$ that $\Gamma(A\cap U, \cH_X) \simeq \Gamma(\phi\left(A\cap U\right), \cH_X) \simeq \Gamma \left( \phi(A) \cap V,\cH_X \right)$ by \Cref{intersection_commutes_with_image_lemma}. Observe that $\phi(A) \cap V$ is an open subset of  $X^{ \text{an}}$ containing the $\phi(p)$. The surjectivity of $\phi^{\text{an}}_{p}$ follows.\\
\hf Now apply  \cite[Proposition 1, Chapter IV]{Narasimhan} to conclude that  $p$ is an isolated point of  $\phi^{-1}(\phi(p))$. Therefore for each point $q \in V$, the fiber $\phi^{-1}(q)$ is a discrete set. Now considering $\phi^{-1}(q)$ as an algebraic fibre, we conclude that it is finite. Therefore the restriction morphism $\phi: \phi^{-1}(V) \longrightarrow V$ is quasi-finite. \\
\hf Note that we can assume $V$ as an algebraic variety over $\C$ and hence, $\phi$ restricted to $\phi^{-1}(V)$ as an algebraic morphism. Also note that the restriction morphism is birational. By the Zariski Main Theorem,  $\phi: \phi^{-1}(V) \longrightarrow V$ is injective. Again $\phi: U \longrightarrow V$ is injective and $U \subseteq \phi^{-1}(V)$. It follows that $\phi^{-1}(V)=U$.
\end{proof} 
\begin{lmm}\label{closure_lemma}
Let $X$ be an algebraic variety and $Y$ be a closed subvariety of $X$. If $A$ be a non-empty open subset of $X^{ \text{an}}\setminus Y^{ \text{an}}$, then $\overline{A}$, the closure of $A$ in the analytic topology of $X^{ \text{an}}$, is analytic of pure dimension $\dim X$. 
\end{lmm}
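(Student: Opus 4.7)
The plan is to show that $\overline{A}$ has pure analytic dimension $\dim X$ at each of its points, using the fact that $A$ itself is already an open subset of $X^{\mathrm{an}}$ of pure dimension $\dim X$. I would carry this out in three steps.

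First I would recall that $X$ is an algebraic variety, hence irreducible of dimension $\dim X$, so the associated analytic space $X^{\mathrm{an}}$ has pure dimension $\dim X$. Since $Y^{\mathrm{an}}$ is a proper closed analytic subset of $X^{\mathrm{an}}$, its complement $X^{\mathrm{an}} \setminus Y^{\mathrm{an}}$ is an open analytic subspace of pure dimension $\dim X$. Because $A$ is open in $X^{\mathrm{an}} \setminus Y^{\mathrm{an}}$, it is open in $X^{\mathrm{an}}$ as well, and hence $A$ itself is an analytic open subspace of pure dimension $\dim X$.

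Next I would pick an arbitrary point $p \in \overline{A}$ and argue that the local analytic dimension of $\overline{A}$ at $p$ equals $\dim X$. For any open neighbourhood $W$ of $p$ in $X^{\mathrm{an}}$, the condition $p \in \overline{A}$ forces $W \cap A \neq \emptyset$; since $W \cap A$ is a non-empty open subset of the pure-dimensional analytic space $X^{\mathrm{an}}$, it carries dimension $\dim X$. From the inclusions $W \cap A \subseteq W \cap \overline{A} \subseteq W$ together with $\dim W = \dim X$, one deduces $\dim (W \cap \overline{A}) = \dim X$. Letting $W$ shrink to $p$ yields the local analytic dimension of $\overline{A}$ at $p$, and because the argument is uniform in $p$ the conclusion is pure-dimensionality.

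The step I expect to require the most care is the interpretation of ``analytic of pure dimension $\dim X$'' for $\overline{A}$, which is \emph{a priori} only a closed topological subset of $X^{\mathrm{an}}$ and not an analytic subset cut out by holomorphic equations. The correct reading is that the local analytic dimension at every point of $\overline{A}$ equals $\dim X$. Once this interpretation is fixed, the argument becomes essentially formal: $A$ is open and dense in $\overline{A}$, so every point of $\overline{A}$ sees the full-dimensional behaviour of $A$ in each of its neighbourhoods, and no lower-dimensional component can appear upon passing to the closure.
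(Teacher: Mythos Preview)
Your proposal has a genuine gap: you explicitly decline to show that $\overline{A}$ is an analytic subset of $X^{\mathrm{an}}$, instead reinterpreting the word ``analytic'' as a statement about local dimension. This is not what the lemma asserts, and it is not what the paper needs downstream. In the proof of the integrality proposition that follows, the set $\overline{\phi(C)}$ is treated as an honest analytic space: the argument uses its local ring $\cH_{\overline{\phi(C)},\phi(p)}$, its regular locus $\overline{\phi(C)}_{\mathrm{reg}}$, weakly holomorphic functions on it, and the surjection $\cH_{X,\phi(p)}\to\cH_{\overline{\phi(C)},\phi(p)}$. None of this is available for a closed subset that is merely topological.

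The paper's proof addresses analyticity head-on: since $A$ is open in $X^{\mathrm{an}}$ it is analytic, and a result from Narasimhan is then invoked to conclude that $\overline{A}$ is an analytic subset of $X^{\mathrm{an}}$. Only once this is in hand does the paper argue pure-dimensionality, and by a different route than yours: it intersects with the regular locus $B\subseteq X$, observes that $A\cap B$ is dense in $\overline{A}$, and concludes that every irreducible component of the analytic set $\overline{A}$ meets $A\cap B$ and hence has dimension $\dim X$. Your neighbourhood argument tacitly presupposes that $\overline{A}$ already carries analytic structure---otherwise ``local analytic dimension'' and ``no lower-dimensional component'' are not defined in the analytic sense---so it remains incomplete until the missing analyticity step is supplied.
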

\begin{proof}
Note that $X^{ \text{an}}\setminus Y^{ \text{an}}$ is open in $X^{ \text{an}}$; so $A$ is open in $X^{ \text{an}}$ as well. Therefore $A$ is an analytic subset of $X^{ \text{an}}$. By \cite[Proposition $4^{\prime}$, Chapter IV]{Narasimhan}, $\overline{A}$ is analytic.\\
\hf Let $B$ denote the set of all regular points of the algebraic variety $X$. Then $B$ is Z-open and Z-dense in $X$, and hence, by \cite[Proposition 5]{GAGA}, $B$ is dense in $X^{ \text{an}}$ as well. Therefore $A \cap B$ is a non-empty dense open subset of $A$. Hence $\overline{A \cap B}=\overline{A}$. Because of the density of $A \cap B$ in $A$, all the irreducible component of $\overline{A}$ must intersect with $A \cap B$. So, $\overline{A}$ is of pure dimension $\dim A \cap B$. It is easy to check that $\dim A \cap B= \dim X$.
\end{proof}
\hf The next proposition plays a central role to prove \Cref{main_thm_codim_case}.
\begin{prp}\label{analytic_stalk_integral_proposition}
 Consider $X, \phi$ as in \cref{psi_map}. Then for each point $p \in X$, the corresponding morphism of analytic stalk  
\begin{equation*}
\phi^{\text{an}}_{p}:\cH_{X,\phi(p)} \longrightarrow \cH_{X,p}
\end{equation*}
 is an integral extension.
\end{prp}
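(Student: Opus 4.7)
For a fixed $p \in X$ and germ $f \in \cH_{X,p}$, I will exhibit a monic polynomial $P(T) \in \phi^{\text{an}}_p(\cH_{X,\phi(p)})[T]$ with $P(f)=0$ in $\cH_{X,p}$. The strategy is to transport $f$ to $V$ via the isomorphism $\phi|_{U} : U \to V$, form the graph of the transported function, extend it analytically across $Z^{\text{an}}$ using the codimension-two hypothesis, and read off $P$ from the elementary symmetric functions of the sheets of the resulting branched covering. Shrink $W \ni p$ so that $|f| \le R$ on $W$, and set $g := f \circ (\phi|_{W \cap U})^{-1}$ on $\Omega := \phi(W \cap U) = \phi(W) \cap V$; this is well defined by \Cref{intersection_commutes_with_image_lemma} and satisfies $|g| \le R$. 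Its graph $\Gamma_g$ is an analytic subset of $\Omega \times \mathbb{C}$ of pure dimension $\dim X$, contained in $\Omega \times \{|z| \le R\}$.

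Next, pick a connected Stein neighborhood $A$ of $\phi(p)$ in $X^{\text{an}}$. The main analytic step is to show that $\overline{\Gamma_g} \cap (A \times \mathbb{C})$ is an analytic subset of $A \times \mathbb{C}$ of pure dimension $\dim X$. Since $g$ is bounded, $\Gamma_g$ has locally finite $(2\dim X)$-Hausdorff measure in $A \times \mathbb{C}$, so Bishop's extension theorem gives the analyticity; one can also split the extension into two steps, extending first across $(V \setminus \Omega) \cap A$ using the boundedness of $g$ and then across the codimension-$\ge 2$ analytic set $Z^{\text{an}} \cap A$ by Remmert--Stein. The projection $\pi := \pr_1|_{\overline{\Gamma_g}} : \overline{\Gamma_g} \cap (A \times \mathbb{C}) \to A$ is then proper (the second factor is bounded), with image a closed analytic subset of $A$ containing the open set $\Omega \cap A$; by \Cref{closure_lemma} applied with $Z$ in place of $Y$, this image has pure dimension $\dim A$, hence equals $A$ after shrinking $A$ to be irreducible. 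The fibers of $\pi$ are compact analytic subsets of fibers of $A \times \mathbb{C} \to A$, hence finite, so $\pi$ is a finite branched analytic covering of some degree $n \ge 1$.

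The classical Weierstrass construction now produces holomorphic coefficients $a_1, \ldots, a_n \in \Gamma(A, \cH_X)$, namely the elementary symmetric functions of the sheets of $\pi$, such that the monic polynomial $Q(v, T) := T^n - a_1(v) T^{n-1} + \cdots + (-1)^n a_n(v)$ vanishes on $\overline{\Gamma_g}$. In particular, $Q(v, g(v)) = 0$ for $v \in \Omega \cap A$, and pulling back along $\phi$ yields $Q(\phi(x), f(x)) = 0$ on $W \cap U \cap \phi^{-1}(A)$; analytic continuation across $Y$ (using $\text{codim}\, Y \ge 2$ and the normality of $X$) extends this to a neighborhood of $p$. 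Taking germs, $P(T) := Q(\phi(\cdot), T)$ is a monic polynomial in $\phi^{\text{an}}_p(\cH_{X,\phi(p)})[T]$ satisfying $P(f) = 0$ in $\cH_{X,p}$, giving the desired integrality. The main technical obstacle is the analyticity of $\overline{\Gamma_g}$ in $A \times \mathbb{C}$: the codimension-$\ge 2$ hypothesis on $Z$ is essential for the Remmert--Stein step, and extending $\Gamma_g$ across the (possibly non-analytic) boundary $(V \setminus \Omega) \cap A$ is the delicate point, handled via Bishop's theorem or an adaptation of Riemann's second removable singularity theorem (the same tool used earlier to establish the sheaf isomorphism $\cH_X \simeq \phi_* \cH_X$).
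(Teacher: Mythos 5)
Your overall plan—transport $f$ to $g$ on the $V$-side via $\phi|_U$, extend analytically using $\operatorname{codim} Z^{\text{an}}\geq 2$, and extract a monic polynomial—is the same as the paper's, and your branched-cover/elementary-symmetric-function construction is essentially an unwinding of the black box the paper cites instead (that weakly holomorphic germs are integral over holomorphic germs). But there is a genuine gap at the step you yourself call delicate: the analyticity of $\overline{\Gamma_g}\cap(A\times\C)$. The set $(V\setminus\Omega)\cap A$ across which you propose to extend is the complement in $A\cap V$ of the open set $\Omega=\phi(W)\cap V$; its relevant part is the \emph{topological} boundary $\partial\Omega\cap V\cap A$, which is neither analytic nor thin. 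Boundedness of $g$ only removes singularities along analytic sets of positive codimension, and every version of Bishop's theorem requires the exceptional set $E$ to be analytic or to satisfy $\mathcal{H}^{2\dim X}(E\cap\overline{\Gamma_g})=0$; here $E\supseteq(\partial\Omega\cap V\cap A)\times\{|z|\leq R\}$ can have real dimension $2\dim X+1$, so neither hypothesis holds. (Also, boundedness of $g$ does not give locally finite $2\dim X$-measure of $\Gamma_g$ up to $\partial\Omega$: already for a Blaschke product on the disc the graph has infinite area.) A bounded $g$ with no extension across $\partial\Omega\cap V$ produces a closure $\overline{\Gamma_g}$ that is not analytic of pure dimension $\dim X$, so the claim that $\pi$ is a finite branched cover of all of $A$ cannot be established this way. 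The step would be harmless if you could shrink $A$ so that $A\cap V\subseteq\Omega$, i.e.\ $\phi(p)\notin\overline{\phi(U\setminus W)}$; but nothing available at this stage excludes other preimages of $\phi(p)$ outside $W$, or non-properness of $\phi$ over $\phi(p)$, either of which defeats such a shrinking.

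The paper avoids exactly this by never requiring the cover to live over a full neighbourhood of $\phi(p)$: it extends $g$ only on (the regular locus of) the analytic set $\overline{\phi(C)}$, concludes that the resulting weakly holomorphic germ is integral over $\cH_{\overline{\phi(C)},\phi(p)}$, and then lifts the monic relation to $\cH_{X,\phi(p)}$ through the surjection $\cH_{X,\phi(p)}\twoheadrightarrow\cH_{\overline{\phi(C)},\phi(p)}$, before pulling back along $\phi$ and using irreducibility of $C$ to propagate $\Phi(f)\equiv 0$ from $C\cap U$ to $C$. If you want to keep your explicit symmetric-function construction, run it over an irreducible germ of the analytic set $\overline{\phi(W)}$ (furnished by \Cref{closure_lemma}) rather than over $A\times\C$, and add the stalk-surjection step to transport the coefficients $a_i$ from $\overline{\phi(W)}$ to $X^{\text{an}}$; as written, the assertion that the image of $\pi$ equals $A$ is precisely what cannot be proved at this point.
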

\begin{proof}
We will first show that $\phi^{\text{an}}_{p}$ is injective. Let $A$ be an open subset of $X^{ \text{an}}$ containing $\phi(p)$ and $f$ be holomorphic function which defines a germ at $\phi(p)$ such that $f \circ \phi \equiv 0$ for some open set $B\subseteq \phi^{-1}(A)$ containing $p$. 
Therefore $f \circ \phi \equiv 0$ on $B \cap U$. As $\phi:U \longrightarrow V$ is a homeomorphism, using \Cref{intersection_commutes_with_image_lemma}, we conclude that $f \equiv 0$ on $\phi(B) \cap V$. Since $\phi(B) \cap V$ is dense in $\phi(B)$, i.e. $\overline{\phi(B) \cap V}= \overline{\phi(B)}$,  apply \Cref{closure_lemma} to conclude that $f \equiv 0$ on the analytic space $\overline{\phi(B)}\cap A$. Therefore by \cite[Definition 4, Chapter III]{Narasimhan}, we conclude that there exist an open subset $D$ of $X^{ \text{an}}$ containing $\overline{\phi(B)}\cap A$ such that $f \equiv 0$ on $ D$. Since $p \in D$, $\phi^{\text{an}}_{p}$ is injective.\\
\hf  Let $C$ be an open subset of $X^{ \text{an}}$ containing $p$ and $f \in \Gamma \left( C,\cH_X \right)$ which defines a germ of holomorphic function at $p$. Without loss of generality we may assume that $C$ is irreducible open analytic subset of $X^{ \text{an}}$ as $X^{ \text{an}}$ being normal, is locally irreducible. Now, $f|_{U}$, the restriction of $f$ to $U$, is an element of $\Gamma \left(C \cap U,\cH_X \right)$. From the isomorphism $\phi^{-1}\cH_V \simeq \cH_U$, it follows that $f|_{U}$ induces an unique holomorphic map $g \in \Gamma \left( \phi(C \cap U),\cH_X \right)$, i.e. $g$ is a holomorphic function defined on $\phi(C) \cap V$ by \Cref{intersection_commutes_with_image_lemma}. \\
\hf  Now we will show that $g$ can be extended to a weakly holomorphic function on $\overline{\phi(C)}$. Note that $\overline{\phi(C)}$ is analytic subset of $X^{ \text{an}}$ of pure dimension $\dim X$ by \Cref{closure_lemma}. Also note that the analytic space $Z^{\text{an}}$ was defined to be the complement of $V$ in $X^{\text{an}}$ and its codimension is at least $2$. Therefore $Z^{\text{an}}\cap \overline{\phi(C)}$ has codimension at least $2$ in $\overline{\phi(C)}$. According to \cite[Proposition 12, Chapter III]{Narasimhan}, $g$ can be extended to a unique holomorphic function $h$ on $\overline{\phi(C)}_{\text{reg}}$, the set of all regular points of the analytic space $\overline{\phi(C)}$.  We may assume $h$ is holomorphic on $\left(\phi(C) \cap V\right)\cup \overline{\phi(C)}_{\text{reg}}$. The analytic space of all singular points of $\overline{\phi(C)}$ which lies in $Z^{\text{an}}\cap \overline{\phi(C)}$ is of codimension at least $2$. According to \cite[Proposition 6.1, Chapter 2]{Demailly}, $h$ is a weakly holomorphic function on $\overline{\phi(C)}$.\\
\hf We denote the germ of weakly holomorphic functions at $\phi(p)\in \overline{\phi(C)}$ by $\widetilde{\cH}_{\overline{\phi(C)},\phi(p)}$ in this sequel. Then $h$ as an element of $\widetilde{\cH}_{\overline{\phi(C)},\phi(p)}$, is integral over $\cH_{\overline{\phi(C)},\phi(p)}$, the germ of holomorphic functions at $\phi(p)\in \overline{\phi(C)}$. On the other hand, note that the morphism $\cH_{X,\phi(p)} \longrightarrow \cH_{\overline{\phi(C)},\phi(p)}$ is surjective, and hence, $h$ satisfies a monic polynomial $\Phi[z]$ with the coefficients in $\cH_{X,\phi(p)}$. Therefore, there exist an open subset $D$ of $X^{ \text{an}}$ such that $\Phi(h)\equiv 0$ on $D$. Also note that $D \cap (\phi(C)\cap V) \neq \emptyset$ as $\phi(C)\cap V$ is dense in $\overline{\phi(C)}$.
\\
\hf On the other hand, note that $h \circ \phi = f$ on $C \cap U$. Hence, $\Phi(f) \equiv 0$ on some non-empty open subset of $C$, namely $\phi^{-1}(D)\cap \left(C \cap U\right)$. Note that the set $\phi^{-1}(D)\cap \left(C \cap U\right)$ is non-empty because $\phi$ restricted to $U$ is bijective to $V$ and $D \cap (\phi(C)\cap V) \neq \emptyset$. Since we assumed $C$ to be an irreduciable analytic space, by \cite[Theorem 6.2, Chapter 2]{Demailly}, we conclude that $\Phi(f)$ is identically zero on $C$. Hence, we can say that the germ defined by the holomorphic function $f$ at $p$ satisfies a monic polynomial with the coefficients in $\cH_{X,\phi(p)}$  via $\phi^{\text{an}}_{p}$. Therefore it is integral over $\cH_{X,\phi(p)}$. This completes the proof.  
\end{proof}

\hf The following corollary plays a crucial role to prove \Cref{main_thm_codim_case}.
\begin{cor}\label{algebraic_stalk_isom_cor}
Let $\phi$ be an endomorphism of a complex algebraic variety $X$ and, let $Y$ be a proper closed subvariety of $X$ such that the restriction of $\phi$ to $X \setminus Y$ is injective. If $\text{codim}_YX \geq 2$, then for each point $p \in X$, the corresponding morphism of algebraic stalk  
\begin{equation*}
\phi_{p}:\cO_{X,\phi(p)} \longrightarrow \cO_{X,p}
\end{equation*}
is an isomorphism.
\end{cor}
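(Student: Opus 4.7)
The plan is to deduce from \Cref{analytic_stalk_integral_proposition} that $\phi$ is quasi-finite at every point of $X$, and then to invoke \Cref{main_thm_quasi_finite_case} (which has already been proved) to conclude that $\phi$ is in fact an automorphism; the statement about algebraic stalks will then follow at once.

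For the first step, fix $p \in X$ and consider the fibre ring
\[
R_p \;:=\; \cH_{X,p}\,\big/\,\mathfrak{m}_{\phi(p)}\cH_{X,p}.
\]
By \Cref{analytic_stalk_integral_proposition} the stalk map $\phi^{\mathrm{an}}_p$ is an integral extension, so $R_p$ is integral over the residue field $\cH_{X,\phi(p)}/\mathfrak{m}_{\phi(p)}=\C$. Being a quotient of the Noetherian local ring $\cH_{X,p}$, the ring $R_p$ is itself Noetherian and local with residue field $\C$; integrality over a field forces $\dim R_p=0$, so $R_p$ is Artinian, and the usual Nakayama/filtration argument on the powers of its maximal ideal then shows that $R_p$ is finite-dimensional over $\C$. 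Geometrically this says $p$ is an isolated point of the analytic fibre $\phi^{-1}(\phi(p))$. Since an analytically isolated point of a Zariski-closed subset of $X$ must lie on a zero-dimensional irreducible component of that subset, $p$ is also Zariski-isolated in the fibre; equivalently, $\phi$ is quasi-finite at $p$. As $p$ was arbitrary, $\phi$ is globally quasi-finite.

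With $\phi$ now known to be quasi-finite, injective on $X\setminus Y$, and acting on the normal variety $X$, \Cref{main_thm_quasi_finite_case} applies and yields that $\phi$ is an automorphism of $X$. In particular, the algebraic stalk map $\phi_p\colon \cO_{X,\phi(p)} \to \cO_{X,p}$ is an isomorphism for every $p\in X$, as desired. I expect the most delicate ingredient in the plan to be the transition from analytic integrality at $p$ to algebraic quasi-finiteness at $p$: it relies on the Noetherianity of complex analytic local rings, the fact that their residue fields all coincide with $\C$, and the elementary observation that an analytically isolated point of a Zariski-closed subset of an algebraic variety must in fact be Zariski-isolated. Once quasi-finiteness is in hand everywhere, the remainder of the argument is purely formal.
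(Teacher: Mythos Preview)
Your argument is correct and takes a genuinely different route from the paper's. The paper proves the stalk isomorphism $\phi_p$ directly: it shows injectivity from the commutative square with the analytic stalks, and for surjectivity it takes $f\in\cO_{X,p}$, lifts it along the birational isomorphism of function fields to some $g\in k(X)$, uses \Cref{analytic_stalk_integral_proposition} together with normality of $\cH_{X,\phi(p)}$ to force $g\in\cH_{X,\phi(p)}$, and then descends to $g\in\cO_{X,\phi(p)}$ via a completion argument (\cite{GAGA} plus faithful flatness of completion). Only afterwards, in the proof of \Cref{main_thm_codim_case}, does the paper use this corollary to deduce flatness, hence constant fibre dimension, hence quasi-finiteness. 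You instead extract quasi-finiteness straight from \Cref{analytic_stalk_integral_proposition}: integrality of $\phi^{\mathrm{an}}_p$ forces the analytic fibre ring $\cH_{X,p}/\mathfrak m_{\phi(p)}\cH_{X,p}$ to be Artinian local with residue field $\C$, hence of finite $\C$-length, so $p$ is analytically (and therefore Zariski-) isolated in its fibre; then \Cref{main_thm_quasi_finite_case} finishes. Your route is shorter and avoids the function-field and completion manipulations entirely; it also shows that \Cref{main_thm_codim_case} is already obtained in the course of proving this corollary, so the paper's separate deduction of \Cref{main_thm_codim_case} from \Cref{algebraic_stalk_isom_cor} becomes redundant. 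The paper's approach, on the other hand, establishes the stalk isomorphism as a self-contained statement without first knowing that $\phi$ is an automorphism, which is closer in spirit to the analytic machinery developed in \Cref{analytic_stalk_integral_proposition}.
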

\begin{proof}
Consider the following commutative diagram of stalks 
\begin{equation}\label{stalk_diagram}
\begin{gathered}
\xymatrix{
\cO_{X,\phi(p)} \ar[r]^{\phi_{p}} \ar[d]_{\alpha} & \cO_{X,p} \ar[d]^{\beta}\\
\cH_{X,\phi(p)} \ar[r]_{\phi^{\text{an}}_{p}} &  \cH_{X,p}
}
\end{gathered}
\end{equation}
where all but $\phi_{p}$ are injective. The arguments of injectivity of $\phi^{\text{an}}_{p}$ is given in the proof of \Cref{analytic_stalk_integral_proposition}. So, $\phi_{p}$ is injective. Throughout this proof, we will use the notations introduced in the diagram above for the maps respectively. We need to prove $\phi_{p}$ is surjective. Note that all the rings in the diagram are normal domain. Also $X$ is normal and $\phi$ is birational; the field of fractions of $\cO_{X,p}$ and $\cO_{X,\phi(p)}$ are the function field $k(X)$ of $X$ and the natural induced map between the field of fractions is an isomorphism. If we denote the field of fractions of $\cH_{X,p}$ and $\cH_{X,\phi(p)}$ by $\mathcal{M}_{p}$ and $\mathcal{M}_{\phi(p)}$ respectively, then the field of fraction of each ring will produce the following commutative diagram
\begin{displaymath}
\xymatrix{
k(X) \ar[r]_{\simeq}^{\widetilde{\phi}_p} \ar[d]_{\widetilde{\alpha}} & k(X) \ar[d]^{\widetilde{\beta}}\\
\mathcal{M}_{\phi(p)} \ar[r]_{\widetilde{\phi}^{\text{an}}_{p}} & \mathcal{M}_{p}
}
\end{displaymath}
Now let $f \in \cO_{X,p}$. We can assume $f \in k(X)$, the field of fraction of $\cO_{X,p}$; there exist an unique element $g$ of $k(X)$, the field of fraction of $\cO_{X,\phi(p)}$ such that $\widetilde{\phi}_p(g)=f$. We will map abuse of notation by denoting $\widetilde{\alpha}(g)$ by $g$. Note that $\widetilde{\phi}^{\text{an}}_{p}(g)=\beta(f)$, where $\beta(f)$ is thought as an element of $\mathcal{M}_{p}$. Since $\beta(f)$ is integral over $\cH_{X,\phi(p)}$ by \Cref{analytic_stalk_integral_proposition},  $g $ is integral over $\cH_{X,\phi(p)}$ as well, and hence, $g \in \cH_{X,\phi(p)}$. Finally, we get $g \in k(X) \cap \mathcal{H}_{X,\phi(p)}$ in $\mathcal{M}_{\phi(p)}$. If we write $g=\dfrac{a}{b}$ for some $a,b \in \cO_{X,\phi(p)}$, then $a=b \cdot g$ in $\mathcal{H}_{X,\phi(p)}$; and hence, $a=b \cdot g$ in the completion ring of $\mathcal{H}_{X,\phi(p)}$. As the completions of both $\mathcal{H}_{X,\phi(p)}$ and $\mathcal{O}_{X,\phi(p)}$ with respct to their maximal ideals respectively, are isomorphic \cite{GAGA}, $b|a$ \footnote{$b |a$ means $b$ divides $a$} in the completion ring of $\cO_{X,\phi(p)}$. Now by \cite[Lemma 1.29, Chapter 1]{Mumford}, $b|a$ in $\cO_{X,\phi(p)}$. Hence $g \in \cO_{X,\phi(p)}$. This completes the proof.
\end{proof}
\hf Now it is easy to prove \Cref{main_thm_codim_case}.\\
\begin{myproof}{\Cref{main_thm_codim_case}}
It is immediate from \Cref{algebraic_stalk_isom_cor} that $\phi:X \longrightarrow X$ is flat. According to \Cref{inverse_image_coincide_cor}, all points of $V$ has zero-dimensional fibre, $\dim \phi^{-1}(y)=0$ for every $y \in X$ as the fibers are of constant dimension under flat morphism. Therefore, $\phi$ is quasi-finite. Since $X$ is normal and $\phi$ is birational, by the Zariski Main Theorem, $\phi$ is injective and hence, an automorphism by the theorem of Ax.
\end{myproof}

\bibliographystyle{alpha}
\bibliography{ref}

\begin{thebibliography}{Mum99}

\bibitem[Ax69]{Ax}
James Ax.
\newblock Injective endomorphisms of varieties and schemes.
\newblock {\em Pacific J. Math.}, 31:1--7, 1969.

\bibitem[Bor69]{Boel}
A.~Borel.
\newblock Injective endomorphisms of algebraic varieties.
\newblock {\em Arch. Math. (Basel)}, 20:531--537, 1969.

\bibitem[Dem12]{Demailly}
Jean-Pierre Demailly.
\newblock {\em Complex Analytic and Differential Geometry}.
\newblock 2012.
\newblock Universit\'{e} de Grenoble {I}, Institut Fourier.
\newblock Available at \url{https://www-fourier.ujf-grenoble.fr/~demailly/manuscripts/agbook.pdf}.

\bibitem[Iit77]{litaka}
S.~Iitaka.
\newblock On logarithmic {K}odaira dimension of algebraic varieties.
\newblock In {\em Complex analysis and algebraic geometry}, pages 175--189.
  1977.

\bibitem[Kal05]{Kaliman}
Shulim Kaliman.
\newblock On a theorem of {A}x.
\newblock {\em Proc. Amer. Math. Soc.}, 133(4):975--977, 2005.

\bibitem[Miy05]{Miyanishi}
M.~Miyanishi.
\newblock Open problems.
\newblock In {\em Affine algebraic geometry}, volume 369 of {\em Contemp.
  Math.}, pages 10--12. Amer. Math. Soc., Providence, RI, 2005.

\bibitem[Mum76]{Mumford}
David Mumford.
\newblock {\em Algebraic geometry. {I}}.
\newblock Springer-Verlag, Berlin-New York, 1976.
\newblock Complex projective varieties, Grundlehren der Mathematischen
  Wissenschaften, No. 221.

\bibitem[Mum99]{Red_book}
David Mumford.
\newblock {\em The red book of varieties and schemes}, volume 1358 of {\em
  Lecture Notes in Mathematics}.
\newblock Springer-Verlag, Berlin, expanded edition, 1999.
\newblock Includes the Michigan lectures (1974) on curves and their Jacobians,
  With contributions by Enrico Arbarello.

\bibitem[Nar66]{Narasimhan}
Raghavan Narasimhan.
\newblock {\em Introduction to the theory of analytic spaces}.
\newblock Lecture Notes in Mathematics, No. 25. Springer-Verlag, Berlin-New
  York, 1966.

\bibitem[Ser56]{GAGA}
Jean-Pierre Serre.
\newblock G\'{e}om\'{e}trie alg\'{e}brique et g\'{e}om\'{e}trie analytique.
\newblock {\em Ann. Inst. Fourier (Grenoble)}, 6:1--42, 1955/56.

\end{thebibliography}

\end{document}